\documentclass[a4paper, 11pt, headings = small, abstract]{scrartcl}

\tracinglostchars=4

\usepackage{etoolbox}
\usepackage{amsmath,amsthm,mathtools}
\usepackage{iftex}
\ifPDFTeX
  \usepackage[utf8]{inputenc}
  \usepackage[theoremfont]{newpxtext}
  \usepackage[vvarbb, upint, bigdelims]{newpxmath}
  \usepackage[scaled=0.95]{inconsolata}
  \usepackage{dsfont}
  \usepackage{bbm}
\else
  \usepackage[no-math]{fontspec}
  \setmainfont{TeXGyrePagellaX}
  \setsansfont{TeX Gyre Heros}
  \setmonofont{inconsolatan}
  \usepackage[warnings-off={mathtools-colon,mathtools-overbracket}]{unicode-math}
  \setmathfont{texgyrepagellamath}
  \setmathfont[range={\mathcal}]{Latin Modern Math}
  \setmathfont[range={"29F5}]{xitsmath}
\fi

\usepackage[english]{babel}
\usepackage{csquotes,xcolor,url,enumitem,geometry}
\geometry{margin=25mm}
\usepackage[citestyle = numeric-comp,bibstyle = numeric, giveninits=true, natbib = true, backend = bibtex,maxbibnames=99, url=false]{biblatex}

\definecolor{myteal}{RGB}{0 123 137}

\usepackage[colorlinks=true, allcolors = myteal]{hyperref}

\theoremstyle{plain}
\newtheorem{theorem}{Theorem}
\newtheorem*{theorem*}{Theorem}

\newtheorem{lemma}[theorem]{Lemma}
\newtheorem{corollary}[theorem]{Corollary}

\theoremstyle{definition}
\newtheorem*{proofnonlattice}{Proof of Theorem~\ref{thm:main} (non-lattice support)}
\newtheorem*{prooflattice}{Proof of Theorem~\ref{thm:main} (lattice support)}
\newtheorem*{proofrw}{Proof of Theorem \ref{thm:main1}}

\theoremstyle{remark}

\setkomafont{sectioning}{\normalcolor\bfseries}
\setkomafont{descriptionlabel}{\normalcolor\bfseries}
\setkomafont{section}{\Large}
\setkomafont{subsection}{\large}
\setkomafont{subsubsection}{\large}
\setkomafont{paragraph}{\large}
\setkomafont{subparagraph}{\large}
\setkomafont{author}{\large}
\setkomafont{date}{\normalsize}


\newcommand\RR{\mathbb{R}}
\newcommand\PP{\mathbb{P}}
\newcommand\EE{\mathbb{E}}
\newcommand\ZZ{\mathbb{Z}}
\newcommand\CC{\mathbb{C}}
\newcommand\iu{\mathrm{i}}
\newcommand\diff[1]{\mathrm{d}#1}
\DeclareMathOperator{\supp}{supp}

\csuse{@ifpackageloaded}{unicode-math}{
  \newcommand\Ind{𝟙}
  \newcommand\Indic[1]{𝟙_{\{#1\}}}
}{
  \newcommand\Ind{\mathbbm{1}}
  \newcommand\Indic[1]{\mathbbm{1}_{\{#1\}}}
}

\csuse{@ifpackageloaded}{unicode-math}{
  \newcommand\bbGamma{ℾ}
}{
  \newcommand{\bbGamma}{{\mathpalette\makebbGamma\relax}}
  \newcommand{\makebbGamma}[2]{%
    \raisebox{\depth}{\scalebox{1}[-1]{$\mathsurround=0pt#1\mathds{L}$}}%
}}

\AtBeginDocument{
  \renewcommand{\Im}{\operatorname{Im}}
  
}


\renewcommand{\tilde}{\widetilde}%
\csuse{@ifpackageloaded}{unicode-math}{
}{
  \newcommand{\overbar}[1]{\mkern 1.5mu\overline{\mkern-1.5mu#1\mkern-1.5mu}\mkern 1.5mu}
}

\let\originalleft\left
\let\originalright\right
\renewcommand{\left}{\mathopen{}\mathclose\bgroup\originalleft}
\renewcommand{\right}{\aftergroup\egroup\originalright}

\newcommand\revised[1]{#1}

\setlist[enumerate,1]{label={(\roman*)}}

\bibliography{lit_overshoot}

\makeatletter
\patchcmd{\@maketitle}{\huge}{\fontsize{18}{20} \selectfont}{}{}
\makeatother

\title{The uniqueness of the Wiener--Hopf factorisation of Lévy processes and random walks}
\author{%
  Leif Döring%
  \thanks{Institute of Mathematics, University of Mannheim,  Germany. Email: \href{mailto:doering@uni-mannheim.de}{doering@uni-mannheim.de}}%
  \and 
  Mladen Savov%
  \thanks{Faculty of Mathematics and Informatics, Sofia University,  Bulgaria. Email: \href{mailto:msavov@fmi.uni-sofia.bg}{msavov@fmi.uni-sofia.bg}} $^{,}$\thanks{Institute of Mathematics and Informatics, Bulgarian Academy of Sciences, Bulgaria. Email: \href{mailto:mladensavov@math.bas.bg}{mladensavov@math.bas.bg}}
  \and 
  Lukas Trottner%
  \thanks{Department of Mathematics, Aarhus University,  Denmark. Email: \href{mailto:trottner@math.au.dk}{trottner@math.au.dk}}
  \and 
  Alexander R.\ Watson%
  \thanks{Department of Statistical Science, University College London, UK. Email: \href{mailto:alexander.watson@ucl.ac.uk}{alexander.watson@ucl.ac.uk}}
}

\begin{document}
\maketitle

\begin{abstract}
  We prove that the spatial Wiener--Hopf factorisation of a
  Lévy process or random walk \revised{without killing} is unique.

  \medskip
  \textbf{Keywords.}
  Bernstein functions,
  Lévy processes,
  random walks,
  tempered distributions,
  Wiener–Hopf factorisation.

  \medskip
  \textbf{MSC 2020.} 60G50, 
  60G51. 
\end{abstract}

\section{Introduction}

Let $X$ be a Lévy process \revised{without killing}, and define $\psi\colon\RR\to\CC$ to be its characteristic exponent: 
$\EE e^{\iu z X_t} = \revised{e^{-t \psi(z)}}$.
The spatial Wiener--Hopf factorisation of $\psi$
 \cite[Theorem 6.15(iv)]{kyprianou2014} 
consists of the identity
\begin{equation}\label{e:whf-0}
  \revised{\psi(z)} = a \kappa_+(z)\kappa_-(-z), 
  \qquad z \in \RR,
\end{equation}
where $\kappa_+$ and $\kappa_-$ are the characteristic \revised{exponents} of certain subordinators,
known respectively as the ascending and descending ladder height processes.
The constant $a>0$ is determined by the normalisation of certain local times,
and we take $a=1$ throughout.

In this work, we address the question of the uniqueness of factorisations
of the form \eqref{e:whf-0}. That is,
suppose that
\begin{equation}\label{e:whf}
  \revised{\psi(z)} = \kappa_+(z)\kappa_-(-z) = \kappa'_+(z)\kappa'_-(-z),
  \qquad z \in \RR,
\end{equation}
for some functions $\kappa_+,\kappa_-,\kappa'_+,\kappa'_-$ defined on
$\CC_u = \{ z \in \CC : \Im z \ge 0 \}$ which are the characteristic
exponents of certain subordinators
(that is, \revised{$\kappa_\pm(\iu \cdot)$ and $\kappa'_\pm(\iu \cdot)$} are Bernstein functions).
We will prove the following result.
\begin{theorem}\label{thm:main}
  There exists some $c>0$ such that
  $\kappa_+(z) = c \kappa'_+(z)$ and $\kappa'_-(z) = c\kappa_-(z)$
  for all $z \in \CC_u$.
\end{theorem}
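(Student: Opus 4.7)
The plan is to transform \eqref{e:whf} into an assertion about the analytic continuation of a single meromorphic function across the real axis. Set $F(z) := \kappa_+(z)/\kappa'_+(z)$ on $\CC_u$ and $\widetilde G(z) := \kappa'_-(-z)/\kappa_-(-z)$ on the closed lower half-plane $\{\Im z \le 0\}$. A direct computation with the Lévy--Khintchine representation gives $\Re(-\kappa'_+(z)) \ge \Phi'_+(\Im z)$, which is strictly positive for $\Im z > 0$ whenever the underlying subordinator is non-trivial; an analogous bound holds for $\kappa_-(-\,\cdot)$. Thus $F$ is holomorphic on $\{\Im z > 0\}$ and $\widetilde G$ is holomorphic on $\{\Im z < 0\}$, and dividing the two factorisations in \eqref{e:whf} yields $F(x) = \widetilde G(x)$ for real $x$ outside the real zero set of the denominators.

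I would then glue $F$ and $\widetilde G$ into a meromorphic function $H$ on $\CC$ by Morera's theorem applied to rectangles straddling the real axis. The isolated singularities of $H$ lie on $\RR$ and are contained in the real zero sets of $\kappa'_+$ and of $\kappa_-(-\,\cdot)$, which consist of the single point $\{0\}$ in the non-lattice case and of a discrete arithmetic progression in the lattice case. Linear upper bounds $|\kappa(z)| \le C(1+|z|)$ on each exponent on $\CC_u$, combined with the lower bounds just mentioned away from $\RR$, yield polynomial growth of $H$ at infinity. In the non-lattice case this forces $H$ to be a rational function with poles only at $0$.

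To identify $H$ as a positive real constant, I would use the following rigidity. On the positive imaginary axis, $H(\iu q) = \Phi_+(q)/\Phi'_+(q)$ is real and strictly positive for $q>0$, and on the negative imaginary axis, $H(-\iu q) = \Phi'_-(q)/\Phi_-(q)$ is also real and strictly positive (here $\Phi_\pm,\Phi'_\pm$ denote the Bernstein functions $-\kappa_\pm(\iu\cdot)$ and $-\kappa'_\pm(\iu\cdot)$). Schwarz reflection then forces $H(-\overline{z}) = \overline{H(z)}$, and combined with the rationality, the at-most-linear growth of Bernstein functions at infinity, and their at most order-one vanishing at the origin, the only admissible $H$ is a positive constant $c$. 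From $F\equiv c$ and $\widetilde G\equiv c$ one reads off $\kappa_+ = c\kappa'_+$ and $\kappa'_- = c\kappa_-$.

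The main obstacle is twofold. In the lattice case the exceptional set is an infinite arithmetic progression and $H$ is no longer a priori rational, so a distinct argument must exploit the periodicity of the zero sets and, presumably, a tempered-distribution formulation of the factorisation, as hinted at by the paper's keywords. Even in the non-lattice case, matching the exact order of vanishing of $\kappa_+$ and $\kappa'_+$ at the origin --- which can be any real number in $(0,1]$ depending on the drift and the mean of the associated Lévy measure --- is what ultimately rules out $H$ being a non-trivial monomial $z^n$ times a constant, and requires careful use of the coupling between the two factorisations provided by the product identity $\kappa_+\kappa_-(-\,\cdot) = \kappa'_+\kappa'_-(-\,\cdot)$.
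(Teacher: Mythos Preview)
Your proposal is essentially the classical Liouville argument, and the paper explicitly explains in its introduction why this approach does \emph{not} go through for unkilled processes. The decisive gap is the step ``linear upper bounds \ldots\ combined with the lower bounds just mentioned away from $\RR$, yield polynomial growth of $H$ at infinity.'' Your only lower bound on the denominator is $\Re(-\kappa'_+(z)) \ge \Phi'_+(\Im z)$, which vanishes as $\Im z \to 0$; on the real axis itself you have no lower bound at all. The paper points out (citing \cite[Example~41.23]{sato2013}) that there exist unkilled subordinators with $\liminf_{\lvert x\rvert\to\infty,\, x\in\RR}\kappa(x)=0$, so $\lvert H(x)\rvert$ can be unbounded along real sequences $x_n\to\infty$ with no polynomial control whatsoever. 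Without a growth bound valid on circles (or on a full sector containing the real axis), you cannot invoke Liouville or conclude rationality, and the argument collapses. The Morera gluing and the positivity on the imaginary axis are fine, but they do not rescue the growth step.

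The paper's strategy is designed precisely to avoid this obstacle. Rather than bounding $F$ directly, it realises $F$ (away from the zeros of $\psi$) as $-\mathcal{F}(Dh)$ for a concrete tempered distribution $h = (\overbar{\mu}_+ + d_+\delta + q_+\Ind_{\RR_+})*U_+'$, and likewise for the lower half-plane with a distribution $\tilde h$. The equality $\mathcal{F}(Dh-D\tilde h)=0$ away from the zero set of $\psi$ then forces $Dh-D\tilde h$ to be a finite sum of derivatives of Dirac masses (non-lattice) or a lattice-supported combination of such (lattice). The crucial quantitative input is not a lower bound on $\kappa'_+$ but the renewal-theoretic tail estimate of Lemma~\ref{l:tail}, $\int_{[1,\infty)} x^{-(2+\epsilon)}\,h(\diff x)<\infty$, which kills the higher-order terms and ultimately forces $F$ to be constant. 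The obstacle you list at the end about matching vanishing orders at the origin is handled by a symmetry trick (applying the argument to $1/F$ as well); the obstacle you do \emph{not} list---growth at infinity along $\RR$---is the one that actually breaks your plan.
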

This result has an important probabilistic consequence for the \textit{theory
of friendship} of Lévy processes developed in \citet{vigondiss}.
If $\kappa_\pm$ are the characteristic exponents of two subordinators,
and there exists a Lévy process $X$ with characteristic exponent $\psi$
satisfying \eqref{e:whf-0}, then these subordinators are called \textit{friends}.
In this case we refer to $X$ as the \textit{bonding Lévy process}.
In \citet{vigondiss} necessary
and sufficient conditions are given for friendship in terms of the
characteristics of the subordinators.
The construction of Lévy processes by these means has been the subject of
intense research over the past decade
\cite{kkp2012,KuznetsovPardo2013,KyprianouPardoWatson2014,kyprianou21}.
However, if one wants to use the ladder height processes of a process
constructed via friendship, for example to describe its hitting
distributions \cite{KPW-cens} or its scale functions \cite[\S 9]{kyprianou2014},
then it is essential that the friendship has a probabilistic meaning. This
can be deduced from our result:
\begin{corollary}\label{coro:levy}
  Two friends $H^+$ and $H^-$ are equal in law to the ascending and
  descending ladder height processes (for some scaling of local time) of
  their bonding Lévy process.
\end{corollary}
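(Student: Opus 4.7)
The plan is to apply Theorem~\ref{thm:main} directly to two Wiener--Hopf factorisations of the same characteristic exponent. Let $H^+$ and $H^-$ be friends with characteristic exponents $\kappa'_+$ and $\kappa'_-$, and let $X$ be a bonding Lévy process with characteristic exponent $\psi$. By the definition of friendship given in the introduction,
\begin{equation*}
  -\psi(z) = \kappa'_+(z)\kappa'_-(-z), \qquad z \in \RR.
\end{equation*}
On the other hand, $X$ admits its own spatial Wiener--Hopf factorisation~\eqref{e:whf-0}. After fixing a normalisation of the local times at the maximum and minimum so that the constant $a$ of~\eqref{e:whf-0} equals $1$, we obtain $-\psi(z) = \kappa_+(z)\kappa_-(-z)$, where $\kappa_+$ and $\kappa_-$ are the characteristic exponents of the ascending and descending ladder height processes.

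Theorem~\ref{thm:main} applied to the two identities produces a constant $c > 0$ such that $\kappa_+(z) = c\kappa'_+(z)$ and $\kappa'_-(z) = c\kappa_-(z)$ for all $z \in \CC_u$.

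It remains to rescale the local times to make the factorisations coincide exponent-by-exponent. Multiplying the local time at the maximum by a constant $\alpha > 0$ has the effect of dividing the characteristic exponent of the ascending ladder height by $\alpha$: indeed, if $L^+$ is replaced by $\alpha L^+$, then its right-continuous inverse is accelerated by $\alpha^{-1}$, and the ladder height process gets time-changed by $t \mapsto t/\alpha$. The same relation holds on the descending side. Thus, rescaling the local time at the maximum by the factor $c$ and at the minimum by $1/c$ transforms the ladder height exponents of $X$ into exactly $\kappa'_+$ and $\kappa'_-$. Since a subordinator is uniquely determined in law by its characteristic exponent, the rescaled ladder height processes are equal in law to $H^+$ and $H^-$, proving the corollary.

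The main obstacle is essentially absent: once Theorem~\ref{thm:main} is in hand, the corollary reduces to the standard observation about how affine rescaling of local times reparametrises the ladder height processes. The only care required is to match the two sides of the equation $\kappa_+\kappa_- = \kappa'_+\kappa'_-$ consistently, i.e.\ to verify that the \emph{same} constant $c$ appears in both equalities of Theorem~\ref{thm:main}, so that a single pair of local-time rescalings works simultaneously for both the ascending and the descending side.
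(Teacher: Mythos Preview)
Your argument is correct and is exactly the deduction the paper has in mind: the paper does not give a separate proof of this corollary, presenting it as an immediate consequence of Theorem~\ref{thm:main}, and your write-up supplies precisely the missing routine details (applying the theorem to the friendship factorisation versus the genuine Wiener--Hopf factorisation, then absorbing the constant $c$ into the local-time normalisations).
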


We emphasise that the difficulty in the results above is that we consider
only the spatial Wiener--Hopf factorisation, and focus on processes
\revised{without killing}.
If one has access to the spatio-temporal Wiener--Hopf factorisation
(that is, if one considers the bivariate ladder processes of $X$), then,
as shown by \citet{CD-trace} and \citet{Kwa-trace},
even knowing just the ascending factor is enough to uniquely specify
the distribution of $X$, and thereby also to determine the descending factor.

Likewise, when $X$ is killed --- or equivalently, when we consider factorisations of
$\revised{q+\psi(z)}$ for some $q>0$ ---
the uniqueness of the factorisation is well-known. 
The traditional proof
proceeds via Liouville's theorem, and can be found, for example,
in \cite[Theorem~1(f)]{kuznetsov2010}.
One first uses the ratios $\kappa_+(z)/\kappa_+'(z)$ 
and $\kappa_-'(z)/\kappa_-(z)$ to define a non-zero entire function $F$.
Taking a continuous version of the logarithm and using asymptotic properties
of the characteristic exponent of a subordinator, one observes that
$\log F$ is sublinear, and therefore constant, which completes the proof.
However, this argument requires a lower bound for each characteristic exponent;
when $X$ is \revised{not killed}, it may be the case that one of these exponents
$\kappa$ has the property that
$\liminf_{\lvert z\rvert \to \infty, z\in\RR} \kappa(z) = 0$.
Examples of such
subordinators are given in \cite[Example~41.23]{sato2013}, and 
we remark that this condition is related to the
\textit{weak non-lattice} property defined in \cite[\S 2.2.2]{PS-bernstein-gamma}.

There is a probabilistic proof of uniqueness, described in \cite[pp.~583--4]{Rog66},
but as this involves factorising the value of $X$
at its lifetime, it too is restricted to the case where $X$ is killed.
The situation is no better for random walks.
For instance, the uniqueness result found in Theorem~12.1.1 or~12.1.2
of \citet{Bor} is restricted to the
setting where killing is present, and when the killing is removed
(putting $\lvert z\rvert=1$ in the notation of \cite{Bor}), a similar
issue appears involving lower bounds on the factors. 

Despite many attempts, we were unable to find a satisfactory proof of
uniqueness when $X$ is \revised{not killed} using either Liouville's theorem or a
probabilistic argument. Instead, we approach the question using the theory of
tempered distributions.
This idea has precedent:
\citet{GK-liouville} have recently made use of distribution theory to prove a
generalised Liouville theorem for Lévy operators.

Our result for Lévy processes immediately resolves the same problem for
random walks. Let $X_1$ be the step of a one-dimensional random walk 
$X = (X_n : n\ge 0)$ started from $X_0=0$,
and assume $X_1$ is not identically zero.
Define
\[
  \tau^+ = \inf\{ n\ge 1: X_n > 0\},
  \quad
  \tau^{-,w} = \inf\{ n \ge 1: X_n \le 0 \}.
\]
We will now define defective random variables $H^+$ and $H^{-,w}$. Adjoin a `cemetery'
state $\Delta$, and extend any function $f\colon \RR \to \CC$ by setting $f(\Delta) = 0$.
Let $H^+$ take the value $X_{\tau^+}$ on the event $\{\tau^+<\infty\}$
and the value $\Delta$ on its complement. Similarly,
let $H^{-,w}$ take the value $X_{\tau^{-,w}}$
on the event $\{\tau^{-,w}<\infty\}$.
These are the first steps of the strict ascending and weak descending ladder
height processes, respectively.
Following \cite[Corollary 12.2.2]{Bor},
the spatial Wiener--Hopf factorisation of this random walk can be expressed
by the identity
\begin{equation*}
  1-\mathbb{E}\big[e^{\mathrm{i}z X_1}\big]
  =\Big(1-\mathbb{E}\big[e^{\mathrm{i}z H^+}\big]\Big)
  \Big(1-\mathbb{E}\big[e^{-\mathrm{i}z H^{-,w}}\big]\Big),\quad z \in\RR,
\end{equation*}
Now, we can show the following result:
\begin{theorem}\label{thm:main1}
  If $V^+$ is a positive defective random variable and
  $V^-$ is a non-negative defective random variable satisfying
  \begin{equation*}
    1-\mathbb{E}\big[e^{\mathrm{i}z X_1}\big]
    = \Big(1-\mathbb{E}\big[e^{\mathrm{i}z V^+}\big]\Big)
    \Big(1-\mathbb{E}\big[e^{-\mathrm{i}z V^-}\big]\Big),
    \quad z \in\RR,
  \end{equation*}
  then $V^+\overset{d}{=}H^+$ and $V^-\overset{d}{=}H^{-,w}$.
\end{theorem}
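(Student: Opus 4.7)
The strategy is to reduce Theorem~\ref{thm:main1} to Theorem~\ref{thm:main} via a compound Poisson embedding. Let $\tilde X$ be the Lévy process with characteristic exponent $\tilde\psi(\theta) = \EE[e^{\iu\theta X_1}] - 1$, i.e.\ a unit-rate compound Poisson process with jump distribution that of $X_1$, so that $-\tilde\psi(\theta) = 1 - \EE[e^{\iu\theta X_1}]$. Both the classical random walk factorisation in terms of $(H^+, H^{-,w})$ recorded in the excerpt and the factorisation hypothesised in Theorem~\ref{thm:main1} in terms of $(V^+, V^-)$ then furnish Wiener--Hopf factorisations of $-\tilde\psi$ in the sense of Theorem~\ref{thm:main}. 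To verify that each factor is admissible, recall that for a non-negative defective random variable $W$ with law $\mu_W$ on $[0,\infty)$ and defect $\PP(W = \Delta)$,
\[
  1 - \EE[e^{-\lambda W}] = \PP(W = \Delta) + \int_{(0,\infty)}(1 - e^{-\lambda x})\mu_W(\diff{x}), \qquad \lambda \ge 0,
\]
is a Bernstein function (killing rate $\PP(W = \Delta)$, jump measure $\mu_W|_{(0,\infty)}$, zero drift); this identifies $\theta \mapsto \EE[e^{\iu\theta W}] - 1$, analytic on $\CC_u$, as the characteristic exponent of a compound Poisson subordinator, handling $V^+, H^+$ directly and $V^-, H^{-,w}$ after reflection to the non-negative side.

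Thus Theorem~\ref{thm:main} applies, yielding a constant $c > 0$ such that
\[
  1 - \EE[e^{\iu\theta V^+}] = c\bigl(1 - \EE[e^{\iu\theta H^+}]\bigr), \qquad 1 - \EE[e^{-\iu\theta H^{-,w}}] = c\bigl(1 - \EE[e^{-\iu\theta V^-}]\bigr),
\]
for all $\theta \in \RR$. Rearranging the first identity as $\EE[e^{\iu\theta V^+}] = (1 - c) + c\,\EE[e^{\iu\theta H^+}]$, the right-hand side is the Fourier transform of the finite signed measure $(1 - c)\delta_0 + c\,\mu_{H^+}$ on $[0,\infty)$, while the left-hand side is that of $\mu_{V^+}$; by uniqueness of the Fourier transform for finite signed measures, these two measures coincide. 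Since $V^+$ is strictly positive, $\mu_{V^+}(\{0\}) = 0$, which forces $1 - c = 0$, i.e.\ $c = 1$. Substituting $c = 1$ into both identities gives $V^+ \overset{d}{=} H^+$ and $V^- \overset{d}{=} H^{-,w}$ as defective random variables, including equality of defect probabilities.

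The main obstacle is precisely the identification $c = 1$: values of the characteristic-function identity at any single point (in particular $\theta = 0$) are compatible with many values of $c$, so one genuinely needs the structural assumption that $V^+$ is \emph{strictly} positive --- equivalently, the absence of an atom at $0$ in $\mu_{V^+}$ --- to eliminate the free scaling. The remaining ingredients, namely the compound Poisson embedding, the Bernstein identification of each factor, and Fourier inversion, are routine.
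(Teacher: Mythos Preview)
Your proof is correct and follows essentially the same approach as the paper: embed the random walk into a compound Poisson process, verify that each factor is the characteristic exponent of a (possibly killed) compound Poisson subordinator, apply Theorem~\ref{thm:main} to obtain the constant $c$, and then use the absence of an atom at $0$ for $V^+$ (and, implicitly, for $H^+$, which is strictly positive by its definition as the strict ascending ladder height) to force $c=1$. The paper carries out the $\kappa_-$ side with an explicit case split on whether $p_{H^{-,w}}>0$, but once $c=1$ is established your direct substitution yields the same conclusion.
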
 
Naturally, an analogue of Corollary~\ref{coro:levy} holds for random walks as well.

\section{Proofs}

We begin with some notation.
If $\kappa$ is the characteristic exponent of a subordinator, then it has the
representation
\begin{equation*}
  \revised{-\kappa(z)}
  = - q + \iu d z + \int_{(0,\infty)} (e^{\iu x z} - 1) \, \mu(\diff{x}),
  \qquad \Im z \ge 0,
\end{equation*}
where $q \ge 0$ is the killing rate, $d \ge 0$ the drift and $\mu$
the Lévy measure, satisfying $\int_{(0,\infty)} (1\wedge x) \, \mu(\diff{x})<\infty$.
We adopt similar notation for the characteristics of the other subordinators
in question.

We will use distribution theory, and refer to
\cite{Vla} for background.
We define $\mathcal{D}$ to be the set of complex-valued smooth
functions with compact support, 
$\mathcal{S}$ to be the set of Schwartz functions
(complex-valued
smooth functions with rapidly decaying derivatives) and $\mathcal{S}'$
the set of tempered distributions (continuous linear functionals from
$\mathcal{S}$ to $\CC$.)
The action of $h \in \mathcal{S}'$ on $\phi \in \mathcal{S}$ is
written $\langle h, \phi \rangle$, and it will often be convenient
to write both the distribution and the action with a dummy variable,
i.e., $h(x)$ and $\langle h(x), \phi(x)\rangle$.
In particular, when $h\in\mathcal{S}'$ is a measure, we will often
still write $h(x)$ in places where, as probabilists, we would usually use the infinitesimal
notation $h(\diff{x})$.

There are a number of useful operations on distributions.
The distributional derivative of $h$ is written $Dh$.
The reflection of $h(x)$ is written $h(-x)$, or $h(-\cdot)$ if
we want to omit the dummy variable, and has the meaning
$\langle h(-x), \phi(x)\rangle = \langle h(x), \phi(-x)\rangle$.
Denote by $\mathcal{F}\phi(z) = \int_{\RR} e^{\iu x z} \phi(x)\diff{x}$
the Fourier transform of $\phi \in \mathcal{S}$, and extend this to
$h \in \mathcal{S}'$ by the identity 
$\langle \mathcal{F} h,\phi\rangle = \langle h, \mathcal{F} \phi\rangle$
\cite[\S 6.2]{Vla}.
We also recall the definition of the support of a distribution from \cite[\S 1.5]{Vla},
as follows. We say that $h\in\mathcal{S}'$ vanishes on an open set $G \subset\RR$
if, for all $\phi \in \mathcal{D}$ with support in $G$,
$\langle h,\phi\rangle=0$. The support of $h$ is defined as
$\supp h = \RR \setminus \bigcup\{ G \subset\RR: G \text{ open},\ h \text{ vanishes on } G\}$.

When $\mu$ is the Lévy measure of some subordinator, we define a distribution
$\bbGamma\mu$ by
\[
  \langle \bbGamma \mu, \phi \rangle = 
  \int_{(0,\infty)} \bigl( \phi(x) - \phi(0) \bigr) \mu(\diff{x}),
  \qquad \phi \in \mathcal{S}.
\]
The properties of a Lévy measure imply that $\bbGamma\mu \in \mathcal{S}'$.
Let $G_+$ be the tempered distribution
\[
  G_+ = -q_+ \delta - d_+ D\delta + \bbGamma \mu_+,
\]
\revised{where $\delta$ is the Dirac mass at zero.}
This has the property that $\mathcal{F}G_+ = \revised{-\kappa_+}$.
Let $U_+ = \int_0^\infty \PP(H_t \in \cdot)\, \diff{t}$ be 
the $0$-resolvent measure of the subordinator $H$ with characteristic
exponent $\kappa_+$, and observe that for all $\epsilon>0$, we have
$\mathcal{F}( e^{-\epsilon \cdot} U_+)(z) = \revised{-\frac{1}{\kappa_+(z + \iu\epsilon)}}$
for $\Im z > -\epsilon$.
Analogous quantities are defined for the other
subordinators involved.

The argument required depends on the support of the Lévy process $X$. We say
that $X$ has \emph{lattice support} if for some (and then any) $t>0$,
the support of the random variable $X_t$ is contained in some lattice strictly
contained in $\RR$.
When $X$ has lattice support, we denote by $\eta>0$ the minimal span of the
support of $X_t$; that is, $X_t$ is supported on $\eta \ZZ$ but on no strict sublattice
thereof. When $X$ does not have lattice support, let $\eta=\infty$.

Since $\psi$ has zeroes exactly at the points of $\frac{2\pi}{\eta}\ZZ$
(understanding this set to be  $\{0\}$ when $\eta=\infty$), we can sensibly define
$F\colon \CC\setminus \frac{2\pi}{\eta}\ZZ \to \CC$ as the holomorphic function given by
\begin{equation}\label{eq:holo}
  F(z)
  =
  \begin{cases}
    \dfrac{\kappa_+(z)}{\kappa'_+(z)}, & \Im z \ge 0, \\[2ex]
    \dfrac{\kappa'_-(-z)}{\kappa_-(-z)}, & \Im z \le 0.
  \end{cases}
\end{equation}
Theorem \ref{thm:main} therefore amounts to the assertion that $F \equiv c$ for some constant $c>0$.

We are now in a position to sketch our argument.
Formally it appears that, for $z \in \RR\setminus \frac{2\pi}{\eta}\ZZ$,
\begin{equation}\label{e:sketch}
  \mathcal{F}(G_+ * U_+')(z) = F(z)  = \mathcal{F}\bigl((G_-' * U_-)(-\cdot) \bigr)(z) ,
\end{equation}
where `$*$' is convolution.
An optimistic approach is to say that $G_+*U_+'(x)$ has support
contained in $[0,\infty)$ and $G_-'*U_-(-x)$ has support contained in
$(-\infty,0]$, and that \eqref{e:sketch} implies that these distributions
are equal. $G_+*U_+'(x)$ must therefore have support $\{0\}$, and hence
be equal to $c\delta(x)$, where $\delta$ is the Dirac mass at $0$,
and this gives that $F = c$.
This is close to the argument that we will make, but it is not quite
valid: the hitch is that \eqref{e:sketch}
holds only on the domain of $F$. However, this does tell us that
$G_+*U_+'(x)$ and $G_-'*U_-(-x)$ differ only by a polynomial (in the non-lattice
case) or a series of polynomials weighted by $e^{\iu a x}$ for $a \in\frac{2\pi}{\eta} \ZZ$
(in the lattice case). Showing that this perturbation is actually zero
requires some bounds on the growth rates of the distributions in question,
which are quite delicate in the lattice case.

This proof outline sounds relatively simple, especially in the non-lattice case,
but the devil is in the details, and in particular, we need to find a rigorous interpretation
of the equation \eqref{e:sketch}.
Addressing these difficulties adds some
complexity to the proof, but the essential idea remains the same.

The proof now begins in earnest.
Define the measure 
\begin{equation*}
  h = (\overbar{\mu}_+ + d_+\delta + q_+\Ind_{\RR_+}) * U_+',
\end{equation*}
where $\overbar{\mu}_+(x) = \Indic{x>0}\mu_+(x,\infty)$, $\delta$ is the Dirac
mass at $0$ and $\Ind_{\RR_+}$ is the indicator function of $\RR_+ = [0,\infty)$.
The intuition is that $-h$ is a primitive of $G_+*U_+'$, though we will neither
prove nor use this assertion.  The key ingredient is the following lemma.
\begin{lemma}
  \label{l:tail}
  For any $\epsilon>0$,
  $\int_{[1,\infty)} x^{-(2+\epsilon)} h(\diff{x}) < \infty$.
\end{lemma}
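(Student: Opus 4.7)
The plan is to control the tail of $h$ via its Laplace transform. Writing $f_+(\lambda) := -\kappa_+(\iu\lambda) = q_+ + d_+\lambda + \int (1-e^{-\lambda x})\mu_+(\diff{x})$ for the Bernstein function associated with $\kappa_+$, and similarly $f_+'$, a standard integration by parts shows that the measure $\overbar{\mu}_+ + d_+\delta + q_+\Ind_{\RR_+}$ has Laplace transform $\lambda\mapsto f_+(\lambda)/\lambda$ on $(0,\infty)$, while $U_+'$ has Laplace transform $1/f_+'(\lambda)$. Since both factors are positive measures on $[0,\infty)$, Tonelli gives
\[
\hat{h}(\lambda) := \int_{[0,\infty)} e^{-\lambda x} h(\diff{x}) = \frac{f_+(\lambda)}{\lambda f_+'(\lambda)} = \frac{F(\iu\lambda)}{\lambda}, \qquad \lambda > 0.
\]

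Next I would pass from this transform bound to a bound on the tail of $h$ itself via the Chebyshev-type inequality $h[0,a] \le e^{\lambda a}\hat{h}(\lambda)$ valid for all $\lambda, a > 0$. Specialising to $\lambda = 1/a$ yields
\[
h[0,a] \le e\,a\,\frac{f_+(1/a)}{f_+'(1/a)}, \qquad a > 0.
\]
The key estimate is that $F$ blows up at most like $1/\lambda$ on the positive imaginary axis near zero. This comes from the fact that $f_+'$ is a Bernstein function, hence concave, so together with $f_+'(0) = q_+'\ge 0$ one has $f_+'(\lambda) \ge \lambda f_+'(1)$ for $\lambda\in(0,1]$ (the degenerate case $\kappa_+'\equiv 0$, which forces $\psi\equiv 0$, may be excluded). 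Combined with the monotonicity bound $f_+(\lambda) \le f_+(1)$ on $(0,1]$, this gives a constant $C>0$ with
\[
h[0,a] \le C a^2, \qquad a \ge 1.
\]

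Finally, using Fubini with $x^{-(2+\epsilon)} = (2+\epsilon)\int_x^\infty y^{-(3+\epsilon)}\diff{y}$,
\[
\int_{[1,\infty)} x^{-(2+\epsilon)}h(\diff{x}) = (2+\epsilon)\int_1^\infty y^{-(3+\epsilon)} h[1,y]\diff{y} \le (2+\epsilon)\,C \int_1^\infty y^{-(1+\epsilon)}\diff{y} < \infty.
\]
The only real difficulty is conceptual: recognising that the Laplace transform of $h$ factors as $F(\iu\lambda)/\lambda$ and identifying concavity of the Bernstein factor $f_+'$ as the mechanism that universally caps the potential blow-up of $F$ near the origin at order $1/\lambda$. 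Once this observation is in hand, the rest of the argument is a routine Chebyshev-plus-Fubini computation.
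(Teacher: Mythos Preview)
Your argument is correct and takes a genuinely different route from the paper. The paper decomposes $h$ into its three summands $d_+ U_+'$, $q_+\Ind_{\RR_+}*U_+'$ and $\overbar{\mu}_+*U_+'$, and bounds each piece separately using the renewal theorem (for $U_+'[n,n+1)$ and the linear growth of $U_+'[0,x]$) together with a further splitting of the $\overbar{\mu}_+$ integral into three regions. Your approach bypasses all of this: by recognising that the Laplace transform of $h$ is exactly $f_+(\lambda)/(\lambda f_+'(\lambda))$ and invoking the concavity of the Bernstein function $f_+'$ to get $f_+'(\lambda)\ge\lambda f_+'(1)$ on $(0,1]$, you obtain the global bound $h[0,a]\le Ca^2$ in one stroke, after which the Fubini step is immediate. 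The student's method is shorter and more structural (it uses only the shape of Bernstein functions, not renewal theory), and in fact delivers slightly more, since the quadratic bound on $h[0,a]$ already shows $h\in\mathcal{S}'$ directly. The paper's hands-on decomposition, on the other hand, yields the auxiliary fact $\int_{[1,\infty)} x^{-(1+\epsilon)}U_+'(\diff{x})<\infty$, though this is not used elsewhere.
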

\begin{proof}
  We consider each part of $h$ separately.
  \begin{enumerate}
    \item\label{i:renewal-meas}
      The first part is $\delta * U_+' = U_+'$. 
      The following calculation provides slightly more than is required for
      the integrability in question, and will be used again for the remaining parts.
      When $\kappa_+'$ corresponds to a subordinator \revised{(without killing)},
      \begin{align*}
        \int_{[1,\infty)} x^{-(1+\epsilon)} U_+'(\diff{x})
        &\le
        \sum_{n= 1}^\infty n^{-(1+\epsilon)} U_+'[n,n+1) \\
        &\le \sum_{n=1}^{N-1} n^{-(1+\epsilon)} U_+'[n,n+1)
        + \biggl( \frac{1}{m_+'}+\rho \biggr) \sum_{n= N}^\infty n^{-(1+\epsilon)}
        < \infty,
      \end{align*}
      where $\rho>0$ is arbitrary and $m_+'\in (0,\infty]$,
      and $N$ satisfying the inequality in question
      exists by the renewal theorem \cite[Theorem 5.3.1]{Revuz84}.
      When $\kappa_+'$ corresponds to a killed subordinator, $U_+'$ is a finite measure,
      and the integrability is immediate.

    \item
      Next, we observe that $\Ind_{\RR_+} * U_+'(x) = U_+'[0,x]$, and again
      by the renewal theorem, $U_+'[0,x]$ grows at most linearly in $x$
      as $x\to \infty$, which establishes that
      $\int_1^\infty x^{-(2+\epsilon)} U_+'[0,x] \diff{x} < \infty$.

    \item
      We consider now the calculation
      \begin{align*}
        \int_1^\infty x^{-(2+\epsilon)} \overbar{\mu}_+ * U_+'(x) \diff{x}
        &= \int_1^\infty x^{-(2+\epsilon)}\! \int_{[0,\infty)} \overbar{\mu}_+(x-y) 
        \, U_+'(\diff{y}) \diff{x} \\
        &= \int_{[0,\infty)}\! \int_0^\infty \overbar{\mu}_+(u) (u+y)^{-(2+\epsilon)}
        \Indic{u+y\ge 1} \diff{u}\, U_+'(\diff{y})
        = I_1+I_2+I_3,
      \end{align*}
      where the integrals $I_1$, $I_2$ and $I_3$ are obtained by the restrictions
      to $\{y \le 1\}$, $\{y>1, u\le 1\}$ and $\{y>1, u>1\}$, respectively.

      These terms can be estimated as follows.
      \begin{align*}
        I_1 &= \int_{[0,1]} \int_0^\infty \overbar{\mu}_+(u) (u+y)^{-(2+\epsilon)}
        \Indic{u+y\ge 1} \, \diff{u} \, U_+'(\diff{y}) \\
        &\le
        \int_{[0,1]} \int_1^\infty \overbar{\mu}_+(u) u^{-(2+\epsilon)}
        \, \diff{u} \, U_+'(\diff{y})
        + \int_{[0,1]} \int_0^1 \overbar{\mu}_+(u) \, \diff{u}\, U_+'(\diff{y}) \\
        &\le
        U_+'[0,1] \biggl(
          \overbar{\mu}_+(1) \int_1^\infty u^{-(2+\epsilon)} \, \diff{u}
          + \int_0^1 \overbar{\mu}_+(u) \, \diff{u}
        \biggr)
        <\infty.
      \end{align*}
      Looking at $I_2$, we have
      \begin{align*}
        I_2
        &=
        \int_{(1,\infty)} \int_0^1 \overbar{\mu}_+(u)
        (u+y)^{-(2+\epsilon)} \Indic{u+y\ge 1} \, \diff{u}\, U_+'(\diff{y})
        \\
        &\le \int_{(1,\infty)} y^{-(2+\epsilon)} \, U_+'(\diff{y})
        \int_0^1 \overbar{\mu}_+(u) \, \diff{u},
      \end{align*}
      and this is finite by the argument in part~\ref{i:renewal-meas}.
      Finally we turn to $I_3$.
      \begin{align*}
        I_3
        &=
        \int_{(1,\infty)} \int_1^\infty \overbar{\mu}_+(u)
        (u+y)^{-(2+\epsilon)} \Indic{u+y\ge 1}
        \, \diff{u} \, U_+'(\diff{y}) \\
        &\le \overbar{\mu}_+(1) \int_{(1,\infty)} \int_1^\infty (u+y)^{-(2+\epsilon)}
        \, \diff{u} \, U_+'(\diff{y}) \\
        &= \frac{\overbar{\mu}_+(1)}{2+\epsilon}
        \int_{(1,\infty)} (y+1)^{-(1+\epsilon)} \, U_+'(\diff{y}),
      \end{align*}
      which again is finite by part~\ref{i:renewal-meas},
      and this completes the proof.
      \qedhere
  \end{enumerate}
\end{proof}

An important consequence is that $h$ is a tempered distribution:
\begin{corollary}
  \label{c:tempered}
  $h \in \mathcal{S}'$.
\end{corollary}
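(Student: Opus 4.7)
The plan is to exploit the simple criterion that a positive Radon measure on $\RR$ defines a tempered distribution as soon as it has polynomial growth in the sense that $\int (1+|x|)^{-k}\,\mu(\diff x) < \infty$ for some $k$. Since $h$ is supported on $[0,\infty)$, it suffices to establish two facts: (a) $h$ is a locally finite measure on $[0,\infty)$, and (b) $\int_{[0,\infty)} (1+x)^{-k}\, h(\diff x) < \infty$ for some $k$.

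For (a), I would check each of the three summands making up the convolution kernel separately. The term $\delta * U_+'$ is just $U_+'$, which is the $0$-resolvent of a subordinator and hence a Radon measure on $[0,\infty)$. The term $q_+ \Ind_{\RR_+} * U_+'$ equals the (bounded on compacta) function $q_+ U_+'[0,\cdot]$, integrated against Lebesgue measure, again locally finite. The only delicate term is $\overbar{\mu}_+ * U_+'$, because $\overbar{\mu}_+(u)$ can blow up as $u\downarrow 0$; however, the Lévy measure condition $\int_{(0,\infty)} (1\wedge x)\,\mu_+(\diff x) < \infty$ translates via Fubini into $\int_0^1 \overbar{\mu}_+(u)\,\diff u < \infty$, so $\overbar{\mu}_+$ is locally integrable on $[0,\infty)$, and a short Fubini argument shows $\int_0^M \overbar{\mu}_+ * U_+'(x)\,\diff x \le U_+'[0,M]\int_0^M \overbar{\mu}_+(u)\,\diff u < \infty$.

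For (b), I would apply Lemma~\ref{l:tail} with, say, $\epsilon = 1$ to obtain $\int_{[1,\infty)} x^{-3}\, h(\diff x) < \infty$, and combine this with local finiteness on $[0,1]$ from (a) to conclude $\int_{[0,\infty)} (1+x)^{-3}\, h(\diff x) < \infty$. Then I define $\langle h, \phi\rangle = \int \phi\,\diff h$ for $\phi \in \mathcal{S}$; the bound
\[
  \lvert \langle h,\phi\rangle \rvert \le \Bigl( \sup_{x\ge 0} (1+x)^3 \lvert\phi(x)\rvert \Bigr) \int_{[0,\infty)} (1+x)^{-3}\, h(\diff x)
\]
shows at once that this integral converges and that $h$ acts continuously on $\mathcal{S}$ through a Schwartz seminorm, so $h \in \mathcal{S}'$. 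The only mildly delicate point is (a), but it is handled entirely by the standard Lévy measure integrability and local finiteness of $U_+'$; Lemma~\ref{l:tail} supplies the genuinely non-trivial input, namely the polynomial tail bound in (b).
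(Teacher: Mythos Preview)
Your proposal is correct and follows essentially the same route as the paper: establish local finiteness of $h$ (the paper alludes to the $I_1$ calculation, you spell out each summand), then invoke Lemma~\ref{l:tail} for polynomial tail control, and combine the two to conclude $h\in\mathcal{S}'$. Your continuity argument via an explicit Schwartz seminorm bound is in fact slightly crisper than the paper's appeal to dominated convergence, but the substance is the same.
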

\begin{proof}
  A short calculation, similar to the one for $I_1$ in the proof above, reveals
  that $h$ is finite on compact sets.
  The fact that
  $\int \lvert \phi(x)\rvert \Indic{\lvert x \rvert \ge 1}\,  h(\diff{x}) < \infty$,
  for $\phi\in\mathcal{S}$,
  follows from the preceding lemma.
  These two observations imply that $\phi \mapsto \int \phi(x)\, h(\diff{x})<\infty$
  is a map from $\mathcal{S}$ to $\CC$,
  and the fact that it is a continuous operation can be proved using the dominated
  convergence theorem and the lemma.
\end{proof}

With this preparation, we can now give the proof of Theorem \ref{thm:main}.
\begin{proofnonlattice}
  \pushQED{\qed}
  Let $u(z) = \frac{F(z)}{\iu z}$
  for $z = \CC\setminus\{0\}$,
  meaning $u(z) = \frac{\kappa_+(z)}{\iu z \kappa_+'(z)}$ where
  $\Im z \ge 0$;
  note that we do not know whether $u$ is a tempered distribution.
  Our next goal is to show that `$\mathcal{F} h = u$
  away from zero', in a sense that we will make precise.

  Let $\epsilon>0$ and define $h_\epsilon(x) = e^{-\epsilon x}h(x)
  = \bigl(e^{-\epsilon \cdot}( \overbar{\mu}_+ + d_+\delta + q_+\Ind_{\RR_+})\bigr)
  * (e^{-\epsilon \cdot} U_+')(x)$. This is a convolution of finite
  measures, so we can compute its Fourier transform directly as an integral.

  When $\Im z > -\epsilon$, we have (using Fubini's theorem for the integral):
  \begin{align*}
    &\int_{(0,\infty)} e^{\iu x z - \epsilon x} (\overbar{\mu}_+(x) + d_+ \delta(x) + q_+ \Ind_{\RR_+}(x)) \diff{x}\\
    &\,= -\frac{q_+}{\iu(z+\iu \epsilon)} + d_+ 
    + \int_{(0,\infty)} \int_0^x e^{\iu x(z+\iu \epsilon)}  \diff{x}\, \mu_+(\diff{y})
    \\
    &\,= \frac{1}{\iu(z+\iu\epsilon)}
    \biggl(-q_+ + \iu d_+(z+\iu\epsilon)
      + \int_{(0,\infty)} (e^{\iu x(z+\iu\epsilon)} - 1)\,
      \mu_+(\diff{y})
    \biggr)
    \\
    &\,= \revised{-\frac{\kappa_+(z+\iu\epsilon)}{\iu(z+\iu\epsilon)}}.
  \end{align*}
  As already mentioned, for such $z$,
  $\mathcal{F}(e^{-\epsilon \cdot} U_+')(z) = \revised{-\frac{1}{\kappa_+'(z+\iu\epsilon)}}$.
  Using the convolution theorem for the Fourier transform of finite measures,
  we obtain that $\mathcal{F}h_\epsilon(z) = u(z+\iu\epsilon)$ for $\Im z > -\epsilon$.

  Let $\phi\in\mathcal{S}$, and recall that $\mathcal{F}\phi\in \mathcal{S}$
  \cite[Lemma, p.~107]{Vla}. Corollary~\ref{c:tempered} implies that
  $\mathcal{F}\phi(x) h(x)$ is a finite (complex) measure, and so the following
  calculation can proceed via dominated convergence:
  \begin{align*}
    \lim_{\epsilon\to 0} \langle \mathcal{F} h_\epsilon, \phi\rangle
    = \lim_{\epsilon \to 0} \langle h_\epsilon, \mathcal{F}\phi\rangle
    &= \lim_{\epsilon\to 0} \int e^{-\epsilon x} h(x) \mathcal{F}\phi(x)\, \diff{x}
    \\
    &= \int h(x) \mathcal{F}\phi(x) \, \diff{x} 
    = \langle h, \mathcal{F}\phi\rangle
    = \langle \mathcal{F}h, \phi \rangle.
  \end{align*}
  Restricting ourselves now just to $\phi\in\mathcal{D}$ 
  such that $0 \notin\supp\phi$, we obtain:
  \begin{align*}
    \langle \mathcal{F}h , \phi \rangle
    = \lim_{\epsilon\to 0} \langle \mathcal{F} h_\epsilon, \phi \rangle
    &= \lim_{\epsilon\to 0} 
    \int \frac{\kappa_+(z+\iu\epsilon)}{\iu(z+\iu\epsilon)\kappa_+'(z+\iu\epsilon)}
    \phi(z) \, \diff{z} \\
    &= \int u(z) \phi(z) \, \diff{z},
  \end{align*}
  where for the last line, we used bounded convergence, since the domain of
  integration is compact and does not contain $0$,
  and the non-lattice condition for the Lévy process
  implies that $\kappa_+'$ has no zeroes except possibly at $0$.

  Using the properties of Fourier transforms \cite[\S 6.3]{Vla}, this implies
  that for such $\phi$,
  \begin{equation}
    \label{e:FDh}
    \langle -\mathcal{F}(Dh)(z), \phi(z)\rangle
    = \langle \mathcal{F}h(z), \iu z \phi(z) \rangle
    = \int \iu zu(z) \phi(z) \, \diff{z}
    = \int F(z) \phi(z) \, \diff{z}.
  \end{equation}
  Written informally, we have found that `$-\mathcal{F}(Dh) = F$ away from zero'.

  Next, we turn to the other half plane, and define
  \begin{equation*}
    \tilde{h}(x) = - (\overbar{\mu}_-' + d_-'\delta + q_-'\Ind_{\RR_+}) * U_-(-x),
  \end{equation*}
  which by the same argument as in Corollary~\ref{c:tempered} is a tempered
  distribution. Carrying out the same steps as above, we find that
  \begin{equation}
    \label{e:FDh-tilde}
    \langle -\mathcal{F}(D\tilde{h})(z), \phi(z)\rangle
    = \int F(z) \phi(z) \, \diff{z},
  \end{equation}
  for $\phi\in\mathcal{D}$ with support not containing $0$.
  Putting together \eqref{e:FDh} and \eqref{e:FDh-tilde}, we have that
  for such $\phi$,
  \begin{equation*}
    \langle \mathcal{F}(Dh), \phi\rangle
    = \langle \mathcal{F}(D\tilde{h}), \phi \rangle .
  \end{equation*}
  In other words, $\supp \mathcal{F}(Dh - D\tilde{h}) \subset \{0\}$.
  By \cite[\S 2.6]{Vla}, there exist
  $N \ge 0$ (finite by \cite[\S 5.2, Corollary~1]{Vla}) and
  coefficients $(a_n)_{0\le n\le N} \subset\CC$ such that
  \[
    \mathcal{F}(Dh - D\tilde{h})
    = \sum_{n=0}^N a_n D^n \delta,
  \]
  which in turn implies that
  \[
    (Dh - D\tilde{h})(x)
    = \sum_{n=0}^N \frac{a_n}{2\pi} (-\iu x)^n
  \]
  and
  \[
    (h-\tilde{h})(x)
    = a_{-1} + \sum_{n=0}^N \frac{a_n}{2\pi}\frac{1}{n+1} (-i)^n x^{n+1},
  \]
  for some $a_{-1}\in\CC$.
  Lemma~\ref{l:tail} tells us that $a_n = 0$ for all $n\ge 1$, meaning that 
  \[
    Dh - D\tilde{h}
    = \frac{a_0}{2\pi}.
  \]
  Recall that $\supp Dh \subset [0,\infty)$ and $\supp D\tilde{h} \subset (-\infty,0]$.
  To make use of this property, we first split the constant:
  \[
    Dh - \frac{a_0}{2\pi} \Ind_{\RR_+} = D\tilde{h} + \frac{a_0}{2\pi} \Ind_{(-\infty,0)}.
  \]
  Consider a function $\phi \in \mathcal{D}$ with $0\notin \supp\phi$.
  We have:
  \begin{equation*}
    \Bigl\langle Dh - \tfrac{a_0}{2\pi}\Ind_{\RR_+}, \phi\Bigr\rangle
    = \Bigl\langle Dh - \tfrac{a_0}{2\pi}\Ind_{\RR_+}, \phi\Ind_{\RR_+}\Bigr\rangle
    = \Bigl\langle D\tilde{h} + \tfrac{a_0}{2\pi} \Ind_{(-\infty,0)}, \phi\Ind_{\RR_+} \Bigr\rangle
    = 0.
  \end{equation*}
  It follows that
  $\supp \Bigl(Dh - \frac{a_0}{2\pi} \Ind_{\RR_+}\Bigr) \subset \{0\}$, and thence that there exist $M\ge 0$
  finite and $(b_m)_{0\le m\le M} \subset \CC$ such that
  \[
    \frac{a_0}{2\pi}\Ind_{\RR_+} - Dh = \sum_{m=0}^M b_m D^m \delta.
  \]
  But since $h$ is a measure, this simplifies to
  \begin{equation}
    \label{e:Dh-a0b0b1}
    Dh = \frac{a_0}{2\pi} \Ind_{\RR_+} - b_0\delta - b_1 D\delta.
  \end{equation}
  Taking primitives in the above equation \cite[\S 2.2, Theorem]{Vla}, we obtain
  \[ h = f - b_1 \delta, \]
  where $f$ is a distribution coming from an absolutely continuous complex measure.
  Therefore, recalling that $h$ is represented by a (positive) measure,
  $b_1 = -h(\{0\}) \le 0$.
  On the other hand, by the equality of distributions established above,
  \[
  D\tilde{h} + \frac{a_0}{2\pi}\Ind_{(-\infty,0)} = Dh - \frac{a_0}{2\pi}\Ind_{\RR_+} = -b_0\delta - b_1D\delta \]
  also, so
  \[ \tilde{h} = \tilde{f} - b_1\delta, \]
  where again $\tilde{f}$ is a distribution arising from an absolutely continuous
  complex measure.
  Thus, since $-\tilde{h}$ is represented by a positive measure,
  $b_1 = -\tilde{h}(\{0\}) \ge 0$.
  Since $b_1\ge 0$ and $b_1\le 0$, we obtain that $b_1 = 0$.

  \revised{We now take the Fourier transform of \eqref{e:Dh-a0b0b1} with $b_1=0$,
  using \cite[equation (6.12)]{Vla} for the $\Ind_{\RR_+}$ term, to obtain}
  \[
    -\mathcal{F}(Dh)(z)
    = \frac{a_0}{2\pi \iu z} - \frac{a_0}{2} \delta(z) + b_0,
  \]
  \revised{where the distribution $1/z$ is to be understood in the sense of principal
  value.}
  In particular, by taking $\phi\in\mathcal{D}$ 
  with $0\notin\supp\phi$ and using \eqref{e:FDh},
  \[
    \int F(z) \phi(z)\, \diff{z}
    =
    \langle -\mathcal{F}(Dh), \phi \rangle
    =
    \int \biggl(\frac{a_0}{2\pi \iu z} + b_0\biggr) \phi(z) \, \diff{z}.
  \]
  From this, it follows
  \begin{equation}\label{e:F-id}
    F(z) = \frac{a_0}{2\pi \iu z} + b_0,
    \qquad z \in \CC \setminus\{0\},
  \end{equation}
  where we used the identity theorem to extend the equality to $z\notin\RR$. 
  Taking reciprocals in the definition \eqref{eq:holo} of $F$, we can repeat the proof
  thus far to obtain
  \begin{equation}\label{e:F-id1}
    \frac{1}{F(z)} = \frac{a^*_0}{2\pi \iu z} + b^*_0,
    \qquad z \in \CC \setminus\{0\}.
  \end{equation}
  Multiplying \eqref{e:F-id} and \eqref{e:F-id1} we arrive at
  \[
    1=\biggl(\frac{a^*_0}{2\pi \iu z} + b^*_0\biggr)\biggl(\frac{a_0}{2\pi \iu z} + b_0\biggr),
    \qquad z \in \CC \setminus\{0\}.
  \]
  Clearly, then $a_0 = a_0^\ast = 0$. 
  Therefore, $F(z)=b_0$ and hence we have shown that
  $\kappa_+(z) = b_0\kappa_+'(z)$ and $\kappa'_-(z) = b_0\kappa_-(z)$
  for $z \in \CC_u \setminus\{0\}$, and the equation extends to $0$
  by taking limits, since all functions involved are continuous there.
  The fact that $b_0 > 0$ follows from the \revised{positivity} of these functions
  at $\iu x$ with $x>0$.
  This completes the proof in the non-lattice case.
  \popQED
\end{proofnonlattice}

We turn now to the case where $X$ has lattice support, and we will begin by collecting
a few facts.
Let $\eta>0$. The support of $X_t$ is contained in $\eta\ZZ$ for some $t>0$
if and only if it is contained in $\eta\ZZ$ for all $t>0$ \cite[Proposition~24.17]{sato2013},
and in this case we say $X$ has support contained in $\eta \ZZ$.
$X$ has support contained in $\eta\ZZ$ if and only if 
$\psi(2\pi/\eta) = 0$ \cite[Theorem~2.4]{Lukacs},
and if this is the case, then in fact $\psi$ is $2\pi/\eta$-periodic,
so that $\psi(2k\pi/\eta) = 0$ for every $k\in\ZZ$.
Moreover, by considering the Lévy-Itô decomposition we can see that this
holds if and only if $X$ is a compound Poisson process whose (finite) Lévy
measure has support contained in $\eta\ZZ$ \cite[Corollary 24.6]{sato2013}.
We also recall that a Lévy process $X$ is a compound Poisson process if and
only if $\psi$ is bounded on $\RR$ \cite[Corollary~3]{bertoin1996}.

From now on, and without loss of generality, we assume that $X$ has support
that is contained in 
$\ZZ$ and in no sublattice thereof; that is, we assume that $\eta=1$ 
in the discussion above  is maximal.

The key information about the structure of the functions $h$ and $\tilde{h}$ 
is captured in the following lemma. 
When working with lattice support, it will be convenient to use the definition
$\overbar{\mu}_+(x) = \mu_+(x,\infty)\Indic{x\ge 0}$ so that in
particular, $\overbar{\mu}_+(0)$ is equal to the total mass of the measure $\mu_+$,
which as we will shortly see is finite;
we adopt the same convention for the other Lévy measure tails involved.
In comparison with the non-lattice case, this does not change the definition of $h$
as a distribution, so we are free to use the results established previously,
in particular Lemma~\ref{l:tail} and Corollary~\ref{c:tempered}.

\begin{lemma}
  \label{l:lattice-structure}
  The tempered distributions $h$ and $\tilde{h}$ are represented by the functions
  \[
    h(x) = (\overbar{\mu}_+ + q_+\Ind_{\RR_+}) * U_+'(x),
    \qquad
    \tilde{h}(x) = -(\overbar{\mu}'_- + q'_- \Ind_{\RR_+}) * U_-(-x),
  \]
  and their distributional derivatives are given by
  \[
    Dh = \sum_{k\ge 0} \bigl( h(k)-h(k-1)\bigr) \delta_k,
    \qquad
    D\tilde{h} = \sum_{k\le 0} \bigl( \tilde{h}(k) - \tilde{h}(k+1)\bigr) \delta_k,
  \]
  which are series converging in $\mathcal{S}'$.
  Moreover,
  \begin{equation}\label{e:lim-h}
    \lim_{k\to\infty} \bigl( h(k)-h(k-1) \bigr) 
    = 0
    = \lim_{k\to-\infty} \bigl( \tilde{h}(k) - \tilde{h}(k+1) \bigr) .
  \end{equation}
\end{lemma}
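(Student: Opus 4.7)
The lemma has three assertions---the step-function form of $h$, the point-mass expansion of $Dh$, and the vanishing limit \eqref{e:lim-h}---which I would prove in sequence; the analogous statements for $\tilde h$ follow by reflection. A preliminary observation is that in the lattice setting each of the four subordinators with characteristic exponents $\kappa_\pm,\kappa'_\pm$ is compound Poisson with Lévy measure supported on $\ZZ_{\ge 1}$ and zero drift: because $-\psi$ is bounded, $2\pi$-periodic and vanishes only on $2\pi\ZZ$, the half-plane analyticity of each factor combined with periodicity of the product forces every factor to be itself $2\pi$-periodic, which is equivalent to having zero drift and integer-supported Lévy measure. Consequently $d_+=0$ and the definition of $h$ collapses to $(\overbar{\mu}_++q_+\Ind_{\RR_+})*U_+'$; since $\overbar{\mu}_++q_+\Ind_{\RR_+}$ is right-continuous and constant on each $[k,k+1)$ for $k\ge 0$, and $U_+'$ is supported on $\ZZ_{\ge 0}$, $h$ is itself a step function with jumps only at nonnegative integers.

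For the derivative, I would pair against $\phi\in\mathcal{S}$, integrate by parts and apply summation by parts:
\[
  \langle Dh,\phi\rangle = -\sum_{k\ge 0}h(k)\bigl(\phi(k+1)-\phi(k)\bigr) = \sum_{k\ge 0}\bigl(h(k)-h(k-1)\bigr)\phi(k),
\]
with the convention $h(-1):=0$. The interchange of sum and integral is justified by the rapid decay of $\phi'$ combined with the polynomial growth of $h$ provided by Lemma~\ref{l:tail}, and convergence of the series in $\mathcal{S}'$ is automatic because $Dh$ is tempered by Corollary~\ref{c:tempered}.

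The subtle step is the limit \eqref{e:lim-h}. A direct computation of the convolution yields
\[
  h(k)-h(k-1) = q_+U_+'(\{k\})+\sum_{m\ge 1}\mu_+(\{m\})\bigl(U_+'(\{k\})-U_+'(\{k-m\})\bigr),
\]
with the convention $U_+'(\{j\})=0$ for $j<0$. Blackwell's renewal theorem yields $U_+'(\{k\})\to\gamma_+'\in[0,\infty)$: this limit equals $1/m'_+$ (writing $m'_+$ for the mean of the subordinator with exponent $\kappa'_+$) when $\kappa'_+$ is unkilled with finite mean, and equals $0$ otherwise. Combining with dominated convergence---using $\mu_+(\RR_+)<\infty$ and the uniform bound on $U_+'(\{\cdot\})$---makes the sum vanish in the limit, leaving $h(k)-h(k-1)\to q_+\gamma_+'$.

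The main obstacle is showing $q_+\gamma_+'=0$. This is immediate when $q_+=0$; when $q_+>0$ we have $q_-=0$ and $X$ drifts to $-\infty$. Suppose for contradiction that $\gamma_+'>0$, i.e., $q'_+=0$ and $m'_+<\infty$. Expanding $-\psi(z)/(\iu z)$ near $z=0$ via the alternative factorisation yields the nonpositive limit $-q'_-m'_+$; via the original factorisation, using $\kappa_-(-z)\sim-\iu z m_-$ when $m_-:=\int x\,\mu_-(\diff x)<\infty$, the same quantity tends to the strictly positive value $q_+m_-$, a contradiction. The infinite-mean case $m_-=\infty$ is handled analogously by showing $\Re\bigl(-\psi(z)/(\iu z)\bigr)$ diverges, clashing with the finite limit from the alternative side. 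Hence $\gamma_+'=0$ in all cases, and the limit \eqref{e:lim-h} follows.
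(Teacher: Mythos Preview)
Your treatment of the derivative formula and of the limit \eqref{e:lim-h} is correct and essentially coincides with the paper's, including the contradiction obtained by comparing $\lim_{z\to 0}\psi(z)/(\iu z)$ through the two factorisations (the paper folds your $m_-=\infty$ case into this by noting that the very existence of the finite limit on the $\kappa'$ side already forces $m_-<\infty$). The gap is in your preliminary observation. You claim that ``half-plane analyticity of each factor combined with periodicity of the product forces every factor to be itself $2\pi$-periodic''; read literally, this is a Liouville argument---form $\kappa'_+(z+2\pi)/\kappa'_+(z)$ in the upper half-plane, match it with the corresponding $\kappa'_-$-ratio across $\RR$, and show the resulting entire function is constant---and that is precisely the approach the paper's introduction explains breaks down for unkilled factorisations, since one has no a priori lower bound on the factors along $\RR$. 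You have not said how boundedness of $\psi$ or the location of its zeros would rescue this, and at this stage you do not yet know that $\mu'_\pm$ are even finite measures or that $d'_\pm=0$.

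The paper avoids the problem by arguing pointwise rather than via analytic continuation. First it takes $\kappa_\pm$, without loss of generality, to be the \emph{actual} ladder height exponents of $X$, so those two are automatically compound Poisson on $\ZZ$. From $\psi(2\pi)=0$ one of $\kappa'_+(2\pi),\kappa'_-(-2\pi)$ vanishes; WLOG $\kappa'_+(2\pi)=0$, and the lattice criterion then makes the $\kappa'_+$-subordinator $\ZZ$-valued and $\kappa'_+$ itself $2\pi$-periodic. For the last factor the paper appeals to Vigon's \emph{équations amicales inversées}, $\mu'_-(-\cdot)=\Pi*U'_+$ on $(-\infty,0)$, to place $\mu'_-$ on $\ZZ_{\ge 1}$, and then rules out $d'_->0$ by observing that $|\kappa'_-(-z)|\sim d'_-|z|$ against a $2\pi$-periodic, not identically zero cofactor $\kappa'_+$ would make $|\psi|$ unbounded on $\RR$. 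A shorter alternative to the équations amicales, once $\kappa'_+$ is known to be $2\pi$-periodic and nonzero off $2\pi\ZZ$, is simply to read periodicity of $\kappa'_-(-\cdot)=-\psi/\kappa'_+$ off the ratio on $\RR\setminus 2\pi\ZZ$ and extend by continuity; either way, the conclusion rests on the vanishing of one specific factor at $2\pi$, not on a half-plane gluing argument.
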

\begin{proof}
  Since the Lévy measure $\Pi$ of $X$ is concentrated on $\ZZ\setminus\{0\}$,
  \revised{the characteristic exponent $\psi$ satisfies}
  \begin{equation}\label{e:latPsi}
    \revised{-\psi(z)}=\int_{-\infty}^\infty(e^{\mathrm{i}zx}-1)\, \Pi(\diff x)=\sum_{k\in \mathbb{Z}}a_k(e^{\mathrm{i}zk}-1),
  \end{equation}
  where $a_k = \Pi(\{k\})$ for $k\in \mathbb{Z}\setminus\{0\}$, $a_0=0$
  and $A=\Pi(\mathbb{R})=\sum_{k\in\mathbb{Z}}a_k<\infty$.  
  The equation \eqref{e:latPsi} can be reformulated as 
  \begin{equation*}
    \revised{-\psi(z)}=\mathcal{F}\bigg(\sum_{k\in \mathbb{Z}}a_k\delta_k -A\delta\bigg),
  \end{equation*}
  where $\delta_x$ is a Dirac mass at $x\in\RR$,
  $\delta=\delta_0$ and
  we note that $\sum_{k\in \mathbb{Z}}a_k\delta_k -A\delta\in\mathcal{S}'$. 

  We may assume that $\kappa_\pm$ are the characteristic exponents of ascending
  and descending ladder height processes of $X$ (for some choice of normalisation of
  the local times of $X$ at its supremum and infimum).
  Since the ladder height processes also live on $\ZZ$, it follows immediately
  that $d_\pm = 0$. 
  In addition, $q_+q_- = 0$, since the ladder height processes cannot be killed simultaneously. 
  Hence, we have that
  \begin{align*}
    \revised{-\kappa_\pm(z)}
    =-q_\pm+\int_{(0,\infty)}(e^{\mathrm{i}zx}-1)\, \mu_\pm(\diff x)
    &=\mathcal{F}\bigg(-q_\pm\delta+\sum_{k\geq 1}a^\pm_k\delta_k-A^\pm\delta\bigg) \nonumber \\
    &=\mathcal{F}\bigg(-(q_\pm+A^\pm)\delta+\sum_{k\geq 1}a^\pm_k\delta_k\bigg),
  \end{align*}
  where $a^\pm_k=\mu_\pm(\{k\})$ for $k\geq 1$,
  $A^\pm=\sum_{k\geq 1}a^\pm_k<\infty$, and
  $\sum_{k\geq 1}a^\pm_k\delta_k-(q_\pm+A^\pm)\delta\in\mathcal{S}'$.
  The second factorization of \eqref{e:whf} yields that
  \begin{equation*}
    0=\revised{\psi(2\pi)} = \kappa'_+(2\pi)\kappa'_-(-2\pi)
  \end{equation*}
  and therefore at least one of the two subordinators defined by $\kappa'_\pm$ corresponds to a subordinator \revised{without killing,} living on $\mathbb{Z}_+=\mathbb{Z}\cap [0,\infty)$.   
  Without loss of generality, assume that this applies to $\kappa'_+$,
  and denote the potential measure of its corresponding subordinator by
  $U_+'=\sum_{k\geq 0}u'_k\delta_k$. 

  From the équations amicales inversées, see \cite[Section 5.1]{vigondiss}, we have $\mu^\prime_-(-\cdot) = \Pi \ast U^\prime_+$ on $(-\infty,0)$, which establishes that $\mu'_-$ is supported by $\mathbb{Z}_+$ as well. 
   Next, if $d^\prime_- > 0$ were to hold, then we would have
   $\lvert \kappa^\prime_-(z)\rvert \sim d^\prime_- \lvert z \rvert$ as $\lvert z \rvert \to \infty$. Since $\kappa^\prime_+$ is $2\pi$-periodic and is not identically zero
   by the previous assumption, this contradicts the fact that $\psi(z) = \revised{\kappa^\prime_+(z)\kappa^\prime_-(-z)}$ is bounded over $z\in\RR$.
   Thus, $d^\prime_- = 0$, and we can therefore write
  \begin{equation}\label{e:k-L}
    \revised{-\kappa'_-(z)}=-q_-^\prime+\int_{(0,\infty)}(e^{\mathrm{i}zx}-1)\, \mu'_-(\diff x)=\mathcal{F}\bigg(\sum_{k\geq 1}b^-_k\delta_k -(q'_-+B^-)\delta\bigg),
  \end{equation}
  where $b^-_k=\mu'_-(\{k\})$ for $k\geq 1$ and $B^-=\sum_{k\geq 1}b^-_k<\infty$.
  
  We have shown that the subordinators pertaining to $\kappa_\pm$ and $\kappa^\prime_\pm$ have lattice support. Letting $U_- = \sum_{k\geq 0} u_k \delta_k$ we therefore arrive at
  \begin{equation}\label{e:h1}
    h(x)
= (\overbar{\mu}_+ + q_+\Ind_{\RR_+}) * U_+'(x)
= \sum_{0\leq k\leq x}u'_k(q_+ + \overbar{\mu}_+(x-k))
\Indic{x\ge 0}.
  \end{equation}
  and 
  \begin{equation*}
    \tilde{h}(x) 
    = - (\overbar{\mu}_-'  + q_-'\Ind_{\RR_+}) * U_-(-x)
    = -\sum_{0\leq k\leq -x}u_k\big( q_-'+\overbar{\mu}_-'(-x-k)\big)\Indic{x\le 0}.
  \end{equation*}
  This proves the representations of $h,\tilde{h}$ given in the statement of the
  result.
  Lemma~\ref{l:tail} and Corollary~\ref{c:tempered} remain valid for lattice-valued processes, so
  $h,\tilde{h} \in \mathcal{S}'$.
  Next, we observe that since both $\mu_+,\mu_-'$ are supported on $\mathbb{Z}_+$, the functions $x \mapsto q_+ +\overbar{\mu}_+(x)$ and $x \mapsto q_-'+\overbar{\mu}_-'(x)$ are constant on $(l,l+1]$ for any $l\in \mathbb{Z}_+$. This shows that the distributional derivatives
  $Dh,D\tilde{h}$ are supported by $\mathbb{Z}$. In particular, it is easy to check that
  \begin{equation*}
    Dh=\sum_{k\geq 0} (h(k)-h(k-1))\delta_k, \quad D\tilde{h}=\sum_{k\leq 0} (\tilde{h}(k)-\tilde{h}(k+1))\delta_k,
  \end{equation*}
  noting that $h(-1)=0 = \tilde{h}(1)$.
  We observe that the series above are convergent in $\mathcal{S}'$ thanks to
  Lemma~\ref{l:tail} applied to $h,\tilde{h}$. 

  Next, we show \eqref{e:lim-h}.
  We furnish the proof only for the limit to $+\infty$, the other case being analogous using \eqref{e:k-L}. First note that if $q^\prime_+ > 0$, the measure $U^\prime_+$ is finite and hence $h(k) - h(k-1) \to 0$ as $k \to \infty$ is obtained immediately from \eqref{e:h1} by the dominated convergence theorem. Assume therefore $q^\prime_+ = 0$. From \eqref{e:h1}
  we get that
  \[
  h(k)-h(k-1)
  = \sum_{0\leq l\leq k-1}u'_l\big(\overbar{\mu}_+(k-l)-\overbar{\mu}_+(k-1-l)\big)+u'_k(q_++\overbar{\mu}_+(0)),
  \quad k\ge 0.
\]
  Noting that $\overbar{\mu}_+(k-l)-\overbar{\mu}_+(k-1-l)=-a^+_{k-l}$, we can simplify this to
  \[
    h(k)-h(k-1)
    = -\sum_{0\leq l\leq k-1}u'_la^+_{k-l}+u'_k(q_+ +\overbar{\mu}_+(0)).
  \]
  By the renewal theorem \cite[Theorem 5.3.1]{Revuz84}
  it holds that  $\lim_{k\to\infty}u'_k=1/m_+'$, where $m_+'\in(0,\infty]$ is the expectation of the subordinator \revised{(without killing)} pertaining to $\kappa'_+$ \cite[Proposition 5.3.4 and Theorem 5.3.8]{Revuz84}. Since $\sum_{l\ge 1} a_l^+<\infty$ we get, using the renewal theorem \cite[Proposition~5.3.3]{Revuz84} that
  \[
    \lim_{k\to\infty} \bigl( h(k) - h(k-1)\bigr)
    =-\frac{1}{m_+'}\sum_{l\geq 0}a^+_{l}+\frac{1}{m_+'}(q_++\overbar{\mu}_+(0))= \frac{q_+}{m_+^\prime}.
  \]
  To conclude, we need to verify that, if $q_+'=0$ and $m_+'<\infty$, then $q_+=0$.
  Assume the opposite, that is, that $q_+'=0$, $m_+'<\infty$ and $q_+>0$.
  Then $\revised{\lim_{z \to 0} \kappa^\prime_+(z)/z = -\mathrm{i}m^\prime_+}$, so that \eqref{e:whf} implies 
  the existence of the limit
  \begin{equation}\label{e:exp_bonding}
    \mathrm{i}\lim_{z \to 0} \revised{\psi(z)}/z
    = \revised{m^\prime_+\kappa^\prime_-(0)} 
    =m^\prime_+q^\prime_- \in [0,\infty).
  \end{equation}
  Using this and once more \eqref{e:whf}, $q_+ > 0$ implies that 
  $q_- = 0$ and that $\revised{\lim_{z \to 0} \kappa_-(z)/z}$ is finite. The latter is therefore equal to $\revised{-\mathrm{i}m_-}$, where $m_- \in (0,\infty)$ is the expectation of the subordinator \revised{(without killing)} pertaining to $\kappa_-$. We arrive at:
  \[
    \mathrm{i}\lim_{z \to 0} \revised{\psi(z)}/z
    = \mathrm{i}q_+ \lim_{z \to 0} \revised{\kappa_-(-z)}/z = -q_+m_- < 0,
  \] 
  which contradicts \eqref{e:exp_bonding}. This establishes $q_+ = 0$ in the remaining case, which concludes the proof of \eqref{e:lim-h}.
\end{proof}

With this lemma in place we can tackle the theorem in the case of lattice support.

\begin{prooflattice}
  \pushQED{\qed}
  As in the non-lattice case, we represent
  $\mathcal{F}Dh-\mathcal{F}D\tilde{h}$
  in two different ways.

  On the one hand, Lemma~\ref{l:lattice-structure} directly implies that
  \begin{equation}\label{e:Dh1}
    \mathcal{F}Dh(z)=\sum_{k\geq 0} (h(k)-h(k-1))e^{\mathrm{i}zk},\quad 	\mathcal{F}D\tilde{h}(z)=\sum_{k\leq 0} (\tilde{h}(k)-\tilde{h}(k+1))e^{\mathrm{i}zk}.
  \end{equation}
  For brevity, let us write
  $v_k = h(k)-h(k-1)$ for $k\ge 1$, $v_k = \tilde{h}(k)-\tilde{h}(k+1)$
  for $k\le -1$ and $v_0 = h(0) - \tilde{h}(0)$.
  Thus,
  \begin{equation}\label{e:distr1}
    \mathcal{F}Dh(z)-\mathcal{F}D\tilde{h}(z)=\sum_{k\in\ZZ} v_ke^{\mathrm{i}kz}.
  \end{equation}
  On the other hand, emulating the argument in the non-lattice case, we see that,
  for any $\phi\in \mathcal{D}$ whose support is disjoint from $2\pi\mathbb{Z}$,
  \begin{equation}
    \label{e:Dh-F-lattice}
    \langle -\mathcal{F}(Dh)(z), \phi(z) \rangle
    = \int F(z) \phi(z)\, \diff{z}
  \end{equation}
  and
  \[
    \langle \mathcal{F}(Dh), \phi\rangle
    = \langle \mathcal{F}(D\tilde{h}), \phi \rangle.
  \]
  The latter implies that that the support of
  $\mathcal{F}Dh-\mathcal{F}D\tilde{h}$ is contained in $2\pi\mathbb{Z}$.
  Applying \cite[\S 2.6 and \S 5.2, Corollary 1]{Vla}, we obtain that
  \begin{equation*}
    \mathcal{F}Dh-\mathcal{F}D\tilde{h}
    =\sum_{k\in\ZZ} \sum_{n=0}^{N_k} w_{k,n}D^n\delta_{2\pi k},
  \end{equation*}
  for some $N_k\ge 0$ and weights $w_{k,n}$,
  and our first task is to show that $N_k$ is bounded in $k$. To do this, we appeal
  to \cite[\S 5.4, Corollary]{Vla}: there exists $N$ such that, for all $\epsilon\in(0,1/2)$,
  there exists a function $g\in \mathcal{S}$ supported on $2\pi\ZZ+(-\epsilon,\epsilon)$
  such that
  \[
    \mathcal{F}Dh - \mathcal{F}D\tilde{h} = D^N g.
  \]
  Suppose now that $N_k > N$ for some $k$ and $w_{k,N_k}\ne 0$.
  Take some bump function
  $m \in \mathcal{S}$ with the property that $m(z)=1$ on 
  $[2\pi k-\epsilon,2\pi k+\epsilon]$ and $m(z) = 0$ on $\RR\setminus[2\pi k-r,2\pi k+r]$,
  with $r \in (\epsilon,1/2)$. Then,
  \[
    (\mathcal{F}Dh - \mathcal{F}D\tilde{h})m
    = \sum_{n=0}^{N_k} w_{k,n} D^n \delta_{2\pi k}
    = (D^Ng)m,
  \]
  and in particular,
  \[
    \sum_{n=N+1}^{N_k} w_{k,n} D^n\delta_{2\pi k}
    = D^N \tilde{g},
  \]
  where $\tilde{g} \in \mathcal{S}$ is supported on $(2\pi k-\epsilon,2\pi k+\epsilon)$.
  Taking primitives $N_k$ times, this yields
  \[
    w_{k,N_k}\delta_{2\pi k} = \tilde{g}_1 + P,
  \]
  where $\tilde{g}_1$ is the $(N_k-N)$-th primitive of $\tilde{g}$, and
  $P$ is a polynomial of power at most $N_k$. Since $w_{k,N_k} \ne 0$,
  this is a contradiction.
  Therefore, $N_k\le N$ for every $k\in \ZZ$, and so we obtain
  \begin{equation}\label{e:distr2}
    \mathcal{F}Dh-\mathcal{F}D\tilde{h}=\sum_{k\in\ZZ} \sum_{n=0}^Nw_{k,n}D^n\delta_{2\pi k},
  \end{equation}

  Our goal now is to take our two representations \eqref{e:distr1} and \eqref{e:distr2}
  and use them to show that $Dh-D\tilde{h} = 0$.
  To this end, fix $l\in\mathbb{Z}$; we aim to show that $w_{l,n}=0$ for $0\leq n\leq N$.
  Rearranging \eqref{e:distr1} and \eqref{e:distr2} gives us that (as distributions)
  \begin{equation}\label{e:distr3}
    \sum_{k\in\ZZ} v_ke^{\mathrm{i}kz}=\sum_{k\in\ZZ} \sum_{n=0}^Nw_{k,n}D^n\delta_{2\pi k}(z)
    = \sum_{n=0}^Nw_{l,n}D^n\delta_{2\pi l}(z) + \Upsilon(z),
  \end{equation}
  where $\Upsilon\in\mathcal{S}'$ and its  support does not intersect with $(2\pi(l-1),2\pi(l+1))$. 
  Moreover, Lemma~\ref{l:lattice-structure} tells us that
  \begin{equation}\label{e:lim}
    \lim_{k\to \pm\infty} v_k=0.
  \end{equation}
  Assume that $w_{l,n} \neq 0$ for some $n$ and choose $n_0$ maximally among these. 
  Suppose first that $n_0$ is even.
  Then choose $\phi_\sigma(x)=(\sqrt{2\pi}\sigma)^{-1}e^{-(x-2\pi l)^2/2\sigma^2}\in\mathcal{S}$. Taking into account that
  \begin{equation}\label{e:FT}
    \mathcal{F}\phi_\sigma(z)=e^{\mathrm{i}z2\pi l}e^{-\frac{\sigma^2z^2}{2}}
  \end{equation}
  from   
  \eqref{e:distr3} we derive 
  \begin{equation*}
    \sum_{k\in\ZZ} v_ke^{-\frac{k^2\sigma^2}2} = \sum_{k\in\ZZ} v_ke^{\mathrm{i}k2\pi l}e^{-\frac{k^2\sigma^2}2}=\sum_{n=0}^{n_0}(-1)^n w_{l,n}\phi^{(n)}_\sigma(2\pi l) + \langle \Upsilon, \phi_\sigma\rangle.
  \end{equation*}
  It is easy to check that
  \[\phi^{(n)}_\sigma(2\pi l)=\frac{C_n}{\sigma^{n+1}},\quad |C_n|>0,\]
  when $n$ is even and $\phi^{(n)}_\sigma(l)=0$ when $n$ is odd. Therefore, we get 
  \begin{equation}\label{e:distr3a1}
    \sigma^{n_0+1}\sum_{k\in\ZZ} v_ke^{-\frac{k^2\sigma^2}2}=C_{n_0}w_{l,n_0} 
    + \langle \Upsilon, \sigma^{n_0+1}\phi_\sigma\rangle
    + o(\sigma) \text{, as } \sigma \to 0.
  \end{equation}
  Let us show that
  \begin{equation}\label{e:id4}
    \lim_{\sigma\to 0}\sigma\sum_{k\in\ZZ} v_ke^{-\frac{\sigma^2k^2}{2}}=0
  \end{equation}
  Fix $\epsilon>0$ and, by \eqref{e:lim}, choose $K\ge 1$ large enough such that
  $\lvert v_k\rvert <\epsilon$ when $\lvert k\rvert \ge K$.
  Then,
  \begin{equation*}
    \begin{split}
      \limsup_{\sigma\to 0}\sigma\bigg|\sum_{k\in \ZZ} v_ke^{-\frac{\sigma^2k^2}{2}}\bigg|
      &\leq \epsilon\limsup_{\sigma\to 0}\sigma\sum_{\lvert k\rvert \geq K}e^{-\frac{\sigma^2k^2}{2}}\\
      &\leq \epsilon\limsup_{\sigma\to 0}\sigma
      \int_{-\infty}^{\infty}
      e^{-\frac{\sigma^2x^2}{2}} \Indic{\lvert x\rvert \ge K-1} \diff{x}
      \leq \sqrt{2\pi}\epsilon
    \end{split}
  \end{equation*}
  Given that $\epsilon$ is arbitrary, this proves \eqref{e:id4}. This shows that 
  \begin{equation}\label{e:distr3a2}
    0=w_{l,n_0} + \lim_{\sigma\to 0}\langle \Upsilon, \sigma^{n_0+1}\phi_\sigma\rangle.
  \end{equation}
  Take an infinitely differentiable function $\rho$ such that  $\rho(x)=1$ for $|x-2\pi l|>1/2$ and $\rho(x)=0$ for $|x-2\pi l|<1/4$. Then from the fact that the support of $\Upsilon$ does not intersect with $(2\pi l-1,2\pi l+1)$ we get that
  \[\langle\Upsilon, \sigma^{n_0+1}\phi_\sigma\rangle=\langle\Upsilon, \sigma^{n_0+1}\phi_\sigma\rho\rangle\]
  with $\sigma^{n_0+1}\phi_\sigma\rho\in\mathcal{S}$ for any $\sigma>0$. 
  Clearly we have that $\lim_{\sigma\to 0}\sigma^{n_0+1}\phi_\sigma\rho=0$ in $\mathcal{S}$,
  and we deduce from \eqref{e:distr3a2} that $w_{l,n_0}=0$. 

  Now, if $n_0$ is instead odd, we define
  $\gamma_\sigma(x) = -\sigma^2\phi_\sigma'(x) = (\sqrt{2\pi}\sigma)^{-1}(x-2\pi l)e^{-(x-2\pi l)^2/2\sigma^2}\in\mathcal{S}$,
  and from \eqref{e:FT} we evaluate
  \[ \mathcal{F} \gamma_\sigma(z) = \iu z \sigma^2 \mathcal{F}\phi_\sigma(z)
    = 
    \iu\sigma^2ze^{\iu z2\pi l}e^{-\frac{\sigma^2z^2}{2}}
  \]
  Also, we get easily that 
  $\gamma^{(n)}_\sigma(2\pi l) = -\sigma^2 \phi^{(n+1)}_\sigma(2\pi l) = -C_{n+1}/\sigma^{n}$
  with $\lvert C_{n+1}\rvert > 0$ if $n$ is odd,
  and $\gamma^{(n)}_\sigma(2\pi l)=0$ if $n$ is even.
  We use these expressions as in \eqref{e:distr3a1} to get 
  \begin{equation*}
    \sigma^{n_0+2}\sum_k v_k \mathrm{i}ke^{-\frac{k^2\sigma^2}2}
    = C_{n_0+1}w_{l,n_0} + \langle \Upsilon, \sigma^{n_0}\gamma_\sigma\rangle + o(\sigma),
    \text{ as } \sigma\to 0.
  \end{equation*}
  Let us check that
  \begin{equation}\label{e:id4b}\lim_{\sigma\to 0}\sigma^2\sum_{k\in\ZZ} v_k\mathrm{i}ke^{-\frac{\sigma^2k^2}{2}}=0.
  \end{equation}
  Fix $\epsilon>0$, and from \eqref{e:lim},
  choose $K\ge 2$ large enough that $\lvert v_k\rvert < \epsilon$ for $k\ge K$.
  By \eqref{e:id4b}, we get that
  \begin{align*}
    \limsup_{\sigma\to 0}\sigma^2\bigg|\sum_{k\in\ZZ} v_k \mathrm{i}k e^{-\frac{\sigma^2k^2}{2}}\bigg|
    & \leq \epsilon\limsup_{\sigma\to 0}\sigma^2\sum_{|k|\geq K}\lvert k \rvert e^{-\frac{\sigma^2k^2}{2}}
    \nonumber \\
    &\leq \frac{2K\epsilon}{K-1}\limsup_{\sigma\to 0}\sigma^2\int_{K-1}^{\infty}xe^{-\frac{\sigma^2x^2}{2}} \diff{x}
    \leq 4\epsilon \int_{0}^\infty xe^{-\frac{x^2}{2}} \diff{x}.
  \end{align*}
  Given that $\epsilon$ is arbitrary, this verifies \eqref{e:id4b}. 
  Since necessarily $n_0\geq 1$ and $\gamma_{\sigma}(2\pi l)=0$ we obtain, as in the case where $n_0$ is even,
  that
  \[
    \lim_{\sigma\to 0} \langle \Upsilon, \sigma^{n_0}\gamma_\sigma\rangle=\lim_{\sigma\to 0} \langle \Upsilon, \sigma^{n_0}\gamma_\sigma\rho\rangle=0
  \]
  and thus
  \begin{equation}\label{e:distr3b1}
    \lim_{\sigma\to 0}\sigma^{n_0+2}\sum_{k\in\ZZ} v_k \mathrm{i}ke^{-\frac{k^2\sigma^2}2}=w_{l,n_0}.
  \end{equation}
  From \eqref{e:id4b} and \eqref{e:distr3b1} it holds that $w_{l,n_0}=0$,
  and we conclude that $w_{l,n}=0$ for all $0\leq n\leq N$ and $l\in\mathbb{Z}$. So, from \eqref{e:distr1} and \eqref{e:distr2}
  \[0=	\mathcal{F}Dh(z)-\mathcal{F}D\tilde{h}(z)=\sum_k v_ke^{\mathrm{i}kz}\]
  and we conclude that $v_k=0$ for all $\mathbb{Z}$. Given the definition of $v_k$, this says that
  \[
    h(k)=h(k-1),\quad k\geq 1
    \qquad \text{and}\qquad
    \tilde{h}(k)=\tilde{h}(k+1),\quad k\leq -1.
  \]
  Thus, from \eqref{e:Dh1} we arrive at
  \[ \mathcal{F}Dh = h(0). \]
  Finally,
  \eqref{e:Dh-F-lattice} implies that $F$ is constant on $\CC\setminus 2\pi\ZZ$.
  This concludes the proof.
  \popQED
\end{prooflattice}

Finally, we derive the theorem for random walks from the main result.
\begin{proofrw}
  \pushQED{\qed}
  Let $p_{X_1} = \PP(X_1\ne 0) > 0$, and define
  \[
    \psi(z)
    =
    \revised{1-\EE[e^{\iu z X_1}]}
    = 
    p_{X_1} \revised{\bigl(1-\EE[ e^{\iu z X_1} \mid X_1 \ne 0 ] \bigr)},
  \]
  which is the characteristic exponent of a compound Poisson process with rate $p_{X_1}$
  and Lévy measure $\PP(X_1 \in \cdot \mid X_1 \ne 0)$.
  Similarly and with analogous notation, let
  \[
    \kappa_+(z) = \revised{1-\EE[e^{\iu z H^+}]}
  \]
  and
  \[
    \kappa_-(z)
    = 
    \begin{cases}
      p_{H^{-,w}}\revised{\bigl(1- \EE[e^{- \iu z H^{-,w}} \mid H^{-,w} \ne 0]\bigr)},
      & p_{H^{-,w}} > 0, \\
      \revised{\PP(H^{-,w}=\Delta)},
      & p_{H^{-,w}} = 0 .
    \end{cases}
  \]
  which are both characteristic exponents of (possibly killed) compound Poisson subordinators;
  note that $p_{H^{-,w}} = \PP(H^{-,w} \ne 0) = \PP(H^{-,w} \in (0,\infty) \cup\{\Delta\})$.
  Using this notation, the equation
  \[
    \revised{\psi(z)} = \kappa_+(z)\kappa_-(-z)
  \]
  represents the Wiener--Hopf factorisation of a Lévy process.
  On the other hand, we can define
  \[
    \kappa_+'(z) = \revised{1-\EE[e^{\iu z V^+}]},
  \]
  and
  \[
    \kappa_-'(z) 
    = 
    \begin{cases}
      p_{V^-} \revised{\bigl(1- \EE[e^{- \iu z V^-} \mid V^- \ne 0] \bigr)},
      & p_{V^-} > 0, \\
      \revised{\PP(V^- = \Delta)},
      & p_{V^-} = 0,
    \end{cases}
  \]
  which gives us two more characteristic exponents of compound Poisson subordinators,
  again possibly killed, with
  the property that
  \[
    \revised{\psi(z)} = \kappa_+'(z)\kappa_-'(-z).
  \]
  Theorem~\ref{thm:main} states that there exists $c>0$ such that
  \begin{equation}\label{e:rw:whf-c}
    \kappa_+'(z) = c\kappa_+(z),
    \quad
    \kappa_-(z) = c\kappa_-'(z).
  \end{equation}
  Taking the first of these equalities and rearranging it, we obtain
  \[
    \EE[e^{\iu z V^+}] = 1-c + c\EE[e^{\iu z H^+}]
    = \mathcal{F}\bigl( (1-c)\delta + c\PP(H^+ \in \cdot; H^+\ne\Delta)\bigr)(z).
  \]
  Since neither $V^+$ nor $H^+$ have an atom at zero, it follows that $1-c = 0$,
  that is, $c=1$. In turn, this implies that $H^+ \overset{d}{=} V^+$.
  Turning now to the second equality in \eqref{e:rw:whf-c},
  we first see that $p_{H^{-,w}}=0$ if and only if $p_{V^-} = 0$, in which
  case $H^{-,w} \overset{d}{=} V^-$. If this is not the case, then
  we can write
  \[
    \EE[e^{\iu z H^{-,w}} \mid H^{-,w} \ne 0]
    = 1-\frac{p_{V^-}}{p_{H^{-,w}}} 
    + \frac{p_{V^-}}{p_{H^{-,w}}}\EE[e^{\iu z V^-} \mid V^- \ne 0].
  \]
  By the same logic as above, we obtain that $p_{V^-} = p_{H^{-,w}}$
  and that $V^-$ conditioned on $\{ V^-\ne 0\}$ has the same distribution
  as $H^{-,w}$ conditioned on $\{ H^{-,w} \ne 0 \}$.
  Combining these two observations yields that $H^{-,w} \overset{d}{=} V^-$,
  which completes the proof.
  \popQED
\end{proofrw}

\paragraph*{Acknowledgments} 
The authors wish to thank Vincent Vigon for fruitful discussions and valuable insights during early stages of the project. LT gratefully acknowledges financial support of Carlsberg Foundation Young Researcher Fellowship grant CF20-0640.  M. Savov acknowledges that this study is financed by the European Union - NextGenerationEU, through the National Recovery and Resilience Plan of the Republic of Bulgaria, project No. BG-RRP-2.004-0008.

\begingroup
\setlength{\emergencystretch}{1.5em}
\printbibliography
\endgroup

\end{document}